\setlist[itemize]{align=left,leftmargin=*,itemindent=0cm,labelindent=\parindentt,labelsep=2mm}
\setlist[enumerate]{label={\bfseries\alph*)},align=left,leftmargin=*,itemindent=0cm,labelindent=\parindent,labelsep=2mm}
\newdimen\parindentt
\newtheorem{theorem}{Theorem}[section]
\newtheorem{proposition}[theorem]{Proposition}
\newtheorem{definition}{Definition}
\newtheorem{corollary}[theorem]{Corollary}
\newtheorem{lemma}[theorem]{Lemma}
\newtheorem*{remark}{Remark}
\theoremstyle{definition}
\newtheorem{example}{Example}
\DeclareMathOperator{\ga}{\textsl{g}}
\def\rm#1{\mathrm{#1}}
\def\cal#1{\mathcal{#1}}
\def\bb#1{\mathbb{#1}}
\def\lie#1{\mathfrak{#1}}
\def\lr#1{\left\langle #1\right\rangle}
\def\co{\colon}
\newcommand{\h}{\frac{1}{2}}
\title[A curvature approach to fatness]{A curvature approach to fatness}
\author{Leonardo F. Cavenaghi}
\address{Instituto de Matemática, Estatística e Computação Científica -- Unicamp, Rua Sérgio Buarque de Holanda, 651, 13083-859, Campinas, SP, Brazil}
\email{leonardofcavenaghi@gmail.com}
\author{Lino Grama}
\address{Instituto de Matemática, Estatística e Computação Científica -- Unicamp, Rua Sérgio Buarque de Holanda, 651, 13083-859, Campinas, SP, Brazil}
\email{lino@ime.unicamp.br}
\begin{document}
\subjclass[2020]{53C12, 53C20, 53C24}
	\keywords{Non-negative curvatures, Fat bundles, Positive sectional curvature, symmetric spaces, Cheeger deformations, compact structure group, dual foliations}

	\begin{abstract} 
This paper delves into the concept of ``fat bundles'' within Riemannian submersions. One explores the structural implications of fat Riemannian submersions, particularly focusing on those with non-negative sectional curvature. The main results include the classification of fibers as symmetric spaces, the isometric correspondence of fat foliations with coset foliations on Lie groups, and the rigidity of dual foliations associated with fat Riemannian submersions.
\end{abstract}
	
	\maketitle

 \pretolerance=1000
	\section{Introduction}
Let $\pi: F\hookrightarrow (M,\ga) \rightarrow (B,\ga_B)$ be a Riemannian submersion, where $F$ represents the fiber and $B$ is the base. Denote the vertical bundle of $\pi$ as $\mathcal{V}$, containing vectors tangent to the fibers. We term the horizontal bundle its $\ga$-orthogonal complementary bundle, which is isometric to $(TB,\ga_B)$. We say that the Riemannian submersion $\pi$ is \emph{fat} if, for every point $x$ in $M$ and every nonzero vector $X$ in $\mathcal{H}_x$, the following condition holds for a local horizontal extension $\widetilde{X}$ of $X$:
\begin{equation}\label{eq:fat}
[\widetilde{X},\mathcal{H}_x]^{\mathbf{v}} = \mathcal{V}_x.
\end{equation}
The left-hand side in Equation \eqref{eq:fat} is $2A_X\mathcal{H}_x$, where $A:\mathcal{H}_x\times \mathcal{H}_x\rightarrow \mathcal{V}_x$ stands for the O'Neill tensor of the submersion $\pi$. The superscript $\mathrm{v}$ represents the projection into the vertical bundle.

As stated, the condition of ``fatness'' (\cite{weinstein1980fat}) is independent of the Riemannian metric on $M$ and pertains solely to the submersion; it relies on the choice of the horizontal distribution. Nevertheless, it becomes particularly significant when studying Riemannian submersions with totally geodesic fibers. Pick $X\in \cal H_x$ and let $A_X^*$ be the $\ga$-dual of $A_X$. According to Gray \cite{gray1967pseudo} or O'Neill \cite{oneill}, for any nontrivial plane $X\wedge V$, where $X\in \mathcal{H}$ and $V\in \mathcal{V}$, the unreduced sectional curvature of $\ga$ at $X\wedge V$ is given by:
\begin{equation}\label{eq:vertical}
K_{\ga}(X,V) = |A^*_XV|_{\ga}^2.
\end{equation}
The condition of fatness can be translated as
\begin{definition}\label{def:verticalfat}
A Riemannian submersion $\pi: F\hookrightarrow (M,\ga)\rightarrow (B,\ga_B)$ with totally geodesic fibers $F$ is termed \emph{fat} if every non-degenerate vertizontal plane $X\wedge V$ has positive curvature.
\end{definition}
It turns out that Definition \ref{def:verticalfat} and the characterization expressed in Equation \eqref{eq:fat} are equivalent -- Proposition \ref{prop:equivalences}.

\bigskip

The fatness condition significantly restricts the possible dimensions of $\mathcal{V}$ compared to those of $\mathcal{H}$. Specifically, for a submersion to be fat, it must satisfy the condition that, for any point $x$ in $M$, $\dim \mathcal{V}_x \leq \dim \mathcal{H}_x - 1$. Moreover, if $\dim \mathcal{V}_x = \dim \mathcal{H}_x - 1$, then $\dim \mathcal{H}_x = 2, 4, 8$, among other cases (see either \cite[Proposition 2.5, p. 8]{Ziller_fatnessrevisited} or Proposition \ref{prop:dimcons}). Some structural results for fat Riemannian submersions were already known. However, the scarcity of examples of fat Riemannian submersions indicates that either the known construction techniques are insufficient or they constitute very particular examples; for a complete account, see \cite{Ziller_fatnessrevisited}.

This paper provides new structure results for fat Riemannian submersions with curvature assumptions. Our main focus are Riemannian submersions $\pi: F\hookrightarrow (M,\ga)\rightarrow (B,\ga_B)$ where $\ga$ has non-negative sectional curvature. We use techniques of the sub-field of ``positive curvatures'' to our results, biased by the concept of ``Dual foliations'' introduced in \cite{wilkilng-dual} and structure results from that coming. We summarize our results next.
\begin{itemize}
\item We show that if the fiber $F$ of a fat Riemannian submersion $\pi$ from a compact non-negatively curved manifold has dimension greater than $1$, then $F$ is a rank-one symmetric space -- Theorem \ref{thm:symmetricfibers}. This result is evidence that examples of fat Riemannian submersions are structurally restriced. It implies Theorem \ref{ithm:symmetricfiber}, classifying all possible holonomy groups for $\pi$ and possible fibers.
\item We show that every fat foliation $\cal F$ induced by the connected component fibers of submersions on compact Lie groups $G$ is isometric to a coset foliation induced by a subgroup $H<G$  -- Theorem \ref{thm:zillerq}. This result relates to Grove's conjecture on Riemannian foliations on Lie groups with bi-invariant metrics: \emph{Let $G$ be a compact simple Lie group with a bi-invariant metric. A Riemannian submersion $\pi:G\rightarrow B$ with connected totally geodesic fibers is induced either by left or right cosets}. To its proof, we combine a result of Sperança \cite{speranca_grove}, the non-metric nature of the fatness condition, and the fact that every Riemannian metric $\ga$ in $G$ making $\cal F$ a Riemannian foliation is \emph{twisted} (Definition \ref{def:twisted}). 
\item We study the rigidity of dual foliations associated with fat Riemannian submersions -- Theorem \ref{cor:guijarao}.
\item Related to Problem 1 in \cite{Ziller_fatnessrevisited}, concerning the known examples $\mathrm{S}^3,~\mathrm{SO}(3)$-fat principal bundles being 3-Sasakian manifolds, we show that a locally symmetric $3$-Sasakian manifold is the round sphere up to the universal covering -- Theorem \ref{ithmq:ziller}.

\end{itemize}

Recent developments related to the subject of fat submersions, which worked as inspiration for our results, have been achieved in \cite{DV2022, cavenaghilinollohann2, finotwostep, finogueo, ovando1, ovando2, BOCHENSKI2016132, BOCHENSKI2016131, ivanov1}.

\subsection*{A few words on the proof techniques}
The following general spirit inspires the results in this paper. The fatness condition is metric-independent. Dual-leaf type arguments are used in \cite{speranca2017on,speranca_grove} to obstruct the vertical bundle of Riemannian manifolds with non-negative sectional curvature and/or positive vertizontal curvature. Such an obstruction allows us to construct a symmetric space structure on the fibers of fat Riemannian submersions under non-negative curvature hypotheses, obtaining Theorem \ref{thm:symmetricfibers}. Direct inspection allows us to classify the possible fiber type (Theorem \ref{ithm:symmetricfiber}). To Theorem \ref{thm:grove}, we use the fact that any submersion's induced fat foliation on a Lie group can be assumed to be Riemannian and of totally geodesic leaves according a bi-invariant metric; this is because any Riemannian metric making a fat foliation $\cal F$ to be a Riemannian foliation is twisted (Definition \ref{def:twisted}) -- Theorem \ref{cor:guijarao}. Sperança's main result in \cite{speranca_grove} ensures that $\cal F$ is isometric to a coset foliation. To explicitly compute the vertizontal curvature of the manifolds here considered, we use the fact that fat bundles are associated bundles and the curvature formulae presented in \cite{Cavenaghi2022}, which ensures such curvatures are the curvature of Cheeger deformed metrics on associated bundles.


 \section{Preliminaries in fat bundles and Cheeger deformations}

 Section \ref{fat:recall} settles notation and recalls some already known structure results for fat Riemannian submersions. In Section \ref{sec:prelicheeger}, we recall the concept of Cheeger deformations on associated fiber bundles. A curvature formula is provided. Its usefulness relies on the fact that fat Riemannian submersions can be seen as associate bundles.
 
 \subsection{Preliminary aspects of fat submersions}
\label{fat:recall}
Let $\pi: F\hookrightarrow (M,\ga) \rightarrow (B,\ga_B)$ be an arbitrary Riemannian submersion. Throughout, decompose $TM = \cal H\oplus \cal V$ where $\cal V$ stands to the sub-bundle of $TM$ collecting pointwise vectors tangent to the fibers $F$. We call $\cal V$ the \emph{vertical bundle}. For each $x\in M$, the vector space $\cal V_x$ is termed \emph{vertical space at $x$}. The pointwise $\ga$-complementary space $\cal H_x$ to $\cal V_x$ is named \emph{horizontal space at $x$}. The collection of these vector subspaces generates the \emph{horizontal fiber bundle} $\cal H$ complementary to $\cal V$. For the sake of self-containment, let us add more details on \emph{Holonomy groups} for Riemannian submersions and recall the concept of the \emph{Holonomy Principal bundle}. 

Following \cite{Guijarro2007}, a result due to Ehresmann states that if the fibers \( F \) of the submersion \( \pi \) are compact, then \( \pi \) is a locally trivial fibration. This means that for any point \( b \in B \), there exists an open neighborhood \( U \) containing \( b \) such that \( \pi^{-1}(U) \cong U \times F \), where \( F \) represents the typical fiber of the submersion. 

Recall that a curve in \( M \) is termed \emph{horizontal} if it is tangent to the distribution \( \mathcal{H} \) at every point. Any closed curve in \( B \) that begins and ends at the same point \( b \in B \) induces a diffeomorphism of the fiber \( F = \pi^{-1}(b) \) over \( b \) by lifting the curve horizontally to points in \( M \). The holonomy group \( \mathrm{Hol}(b) \) at the point \( b \) consists of all such diffeomorphisms governed by an appropriate composition rule. This group is trivial in the case of a Riemannian product \( F \times B \rightarrow B \). However, in general, the holonomy group is not a Lie group. Nonetheless, for homogeneous submersions, the holonomy group at any point does form a Lie group and serves as the structure group of the bundle (see \cite[Theorem 2.2]{Guijarro2007}).

Let \( \pi_P: P \rightarrow B \) be a \( G \)-principal bundle. A principal \( G \)-connection on \( \pi_P \) is defined as a distribution \( \widetilde{\mathcal{H}} \) on \( P \) that is complementary to the kernel of the differential \( \mathrm{d}\pi_P \) and is invariant under the \( G \)-action. Specifically, it satisfies the condition \( g_{\ast}\widetilde{\mathcal{H}}_p = \widetilde{\mathcal{H}}_{gp} \), where \( p \in P \) and \( g_{\ast} \) denotes the derivative of \( g \in G \) viewed as a map \( g: P \rightarrow P \). The \emph{holonomy group of the connection at \( b \)} comprises the diffeomorphisms of the fiber over \( b \) that are obtained by lifting loops based at \( b \) to curves that are everywhere tangent to \( \widetilde{\mathcal{H}} \).

When considering a Riemannian submersion \( \pi \) with totally geodesic fibers, Theorem 1.4.1 in \cite{gw} indicates that \( \pi \) is indeed a fiber bundle. Theorem 2.2 in \cite{Guijarro2007} demonstrates that if the assumption of totally geodesic fibers \( F \) is relaxed in favor of considering \( F \) as a homogeneous space, \( \pi: (M, \ga) \rightarrow (B, \ga_B) \) can still be regarded as a fiber bundle. In such a case, we can always extract from it a principal bundle \( P \rightarrow B \) (for further details, refer to Section \ref{sec:prelicheeger}). 

Proposition \ref{prop:connection} states that the horizontal distribution of the submersion \( \pi \) naturally induces a connection \( \widetilde{\mathcal{H}} \) on the principal bundle \( P \rightarrow B \), which is compatible with the Riemannian submersion metric \( \ga \) on \( M \) in the sense that \( \mathcal H = \mathrm{d}\bar{\pi}(\widetilde{\mathcal{H}} \times \{0\}) \), where \( \bar{\pi}: P \times F \rightarrow M \) is the projection described by Equation \eqref{eq:projection}. Furthermore, Proposition \ref{prop:fatfiber} asserts that if \( \pi \) is \emph{fat} (Definition \ref{def:fatness}), then the holonomy group of this decoupled associated principal bundle coincides with the holonomy group of \( \pi \).

 Let $\Gamma(\cal H),~\Gamma(\cal V)$ stands to the $\mathrm{C}^{\infty}(M)$-module of vector fields taking values at $\cal H,~\cal V$, respectively. Denote by $A:\Gamma(\cal H)\times\Gamma(\cal H)\rightarrow \Gamma(\cal V)$ the O'Neill tensor $A_XY:=\h [X,Y]^{\mathbf{v}}$. 
 \begin{definition}\label{def:fatness}
     We say that the Riemannian submersion $\pi$ is \emph{fat} if, for any $x\in M$,
\[2A_{\widetilde X}\cal H_x=[\widetilde X,\cal H_x]^{\mathbf{v}} = \cal V_x\]
for any non-zero $X\in \cal H_x$ and any local extension $\widetilde X$ of $X$. 
 \end{definition}
 \begin{remark}[The geometric flavor of the fatness assumption]
     The definition of fatness (Definition \ref{def:fatness}) does not require the existence of a Riemannian submersion metric to the submersion $\pi: M\rightarrow B$. It is solely related to choosing a complementary distribution to $\cal V=\bigcup_x\mathrm{ker}~\mathrm{d}\pi_x$. Nevertheless, it acquires a more geometric flavor under the assumption of $\pi$ carrying a Riemannian submersion metric with totally geodesic fibers -- Proposition \ref{prop:equivalences}.
 \end{remark}

This subsection provides a concise overview of key results concerning fat submersions. The primary references for this topic are \cite{Ziller_fatnessrevisited} and \cite{gw}.
\begin{proposition}[Theorem 2.7.2, p.98 in \cite{gw}]\label{prop:connection}
    Let $\pi: F\hookrightarrow (M,\ga) \rightarrow (B,\ga_B)$ be a Riemannian submersion with totally geodesic fibers $F$. Then, $\pi$ is a fiber bundle, and $\ga$ is a connection metric.
\end{proposition}
As an important consequence of the former, it holds
\begin{proposition}[Proposition 2.6, p.9 in \cite{Ziller_fatnessrevisited}]\label{prop:fatfiber}
Let $\pi: F\hookrightarrow (M,\ga) \rightarrow (B,\ga_B)$ be a fat Riemannian submersion with totally geodesic fibers $F$. Then the holonomy group $H=\mathrm{Hol}(b)$ of $\pi$ at some point $b\in B$ acts transitively in $F$, making it a homogeneous manifold.  Consequently, there exists $K<H$ such that $F=H/K$ and
total space $M$ is diffeomorphic to $P\times_H(H/K) \cong P/K$, where $P$ is the $\pi$-associated holonomy
principal bundle.
\end{proposition}

\begin{definition}[The $A^*_X$-dual to the O'Neill tensor]
       Let $\pi: (M,\ga)\rightarrow (B,\ga_B)$ be a Riemannian submersion. Pick any $x\in M$ and fix a non-zero $X\in\cal H_x$. We denote by $A^*_X$ the $\ga$-dual to $A_X$,
        \[A^*_X : \cal V_x \rightarrow \cal H_x.\]
\end{definition}

\begin{proposition}[Proposition 2.4, p.8 in \cite{Ziller_fatnessrevisited}]\label{prop:equivalences}
    Let $\pi: (M,\ga)\rightarrow (B,\ga_B)$ be a Riemannian submersion with totally geodesic fibers. Then the following are equivalent 
    \begin{enumerate}
        \item  for any non-zero $X\in \cal H_x$, the $\ga$-dual $A^*_X$ to $A_X$,
        \[A^*_X : \cal V_x \rightarrow \cal H_x\]
        is an injective map.
        \item for each non-zero $V\in \cal V_x$ the rule $\cal H_x\times \cal H_x\ni (X,Y) \mapsto \ga(A_XY,V)$ defines a non-degenerate two form in $\cal H_x$.
        \item $\dim \cal V_x \leq \dim \cal H_x -1$. If the quality holds then $A_X : \cal H_x\cap \{X\}^{\perp}\rightarrow \cal V_x$ is an isomorphism.
        \item the vertizontal curvature $K_{\ga}(X,V)$ is everywhere positive for non-degenerate vertizontal planes $X\wedge V\neq 0,~X\in \cal H_x,~V\in \cal V_x,~\forall x\in M$.
    \end{enumerate}
\end{proposition}

Other dimensional constraints are presented under the condition of fatness.
\begin{proposition}[Proposition 2.5 in \cite{Ziller_fatnessrevisited}]\label{prop:dimcons}
     Let $\pi: M\rightarrow B$ be a fat submersion. The following dimensional constraints hold
     \begin{enumerate}
         \item $\dim B = \dim \cal H_x$ is even;
         \item $\dim \cal V_x = \dim \cal H_x -1$ implies that $\dim B = 2, 4, 8$;
         \item if $\dim \cal V_x \geq 2$ then $\dim B = 4k$, while if $\dim \cal V_x \geq 4$ then $\dim B = 8k$. 
     \end{enumerate}
\end{proposition}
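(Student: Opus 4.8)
The plan is to strip fatness down to a statement of linear algebra about a linear family of invertible skew‑symmetric endomorphisms and then feed it to the classical (non)existence theorems for real division algebras. Fix $x\in M$, write $\cal H:=\cal H_x$, $\cal V:=\cal V_x$, $m:=\dim\cal H=\dim B$, $d:=\dim\cal V$, and assume $d\ge 1$ (otherwise $\pi$ is a local diffeomorphism and there is nothing to prove). Put any Riemannian submersion metric on $M$. By the equivalent form of fatness in the preceding proposition, for each $0\neq V\in\cal V$ the alternating form $\omega_V(X,Y):=\ga(A_XY,V)$ on $\cal H$ is nondegenerate; writing $\omega_V(X,Y)=\ga(J_VX,Y)$ defines a skew‑symmetric $J_V\in\mathfrak{so}(\cal H)$ (so $A^*_XV=J_VX$), and nondegeneracy says exactly that $J_V$ is invertible. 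The map $V\mapsto J_V$ is linear and injective, so fatness is equivalent to the data of a subspace $\{J_V:V\in\cal V\}\subset\mathfrak{so}(\cal H)$ all of whose nonzero elements are invertible; equivalently, the bilinear map $\beta:\cal H\times\cal V\to\cal H$, $\beta(X,V):=J_VX$, is \emph{nonsingular}: $\beta(X,V)=0$ forces $X=0$ or $V=0$ (if $V\neq0$ then $J_V$ is invertible; if $X\neq0$ then $V\mapsto J_VX$ is injective, which is statement (a) of that proposition). This is the only input we will use from fatness.

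Part (a) is then immediate: a nonzero skew‑symmetric real matrix of odd size is singular, so $J_V$ invertible forces $m=\dim\cal H=\dim B$ even. For part (b), assume $d=m-1$. Fix a unit $e\in\cal H$; since $\ga(J_Ve,e)=\omega_V(e,e)=0$, the injective map $\Phi_e:\cal V\to e^\perp$, $V\mapsto J_Ve$, is an isomorphism by equality of dimensions. Using $\Phi_e$ to identify $\cal V\cong e^\perp$ and writing $\cal H=\R e\oplus e^\perp\cong\R^m$, define a bilinear multiplication by $x\cdot(\lambda e+w):=\lambda x+J_{\Phi_e^{-1}(w)}x$ for $\lambda\in\R$, $w\in e^\perp$, $x\in\cal H$. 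Then $e$ is a two‑sided unit, and $x\cdot(\lambda e+w)=(\lambda I+J_V)x$ with $V=\Phi_e^{-1}(w)$; the endomorphism $\lambda I+J_V$ is invertible whenever $(\lambda,V)\neq(0,0)$ (if $\lambda=0$ because $J_V$ is invertible, and if $\lambda\neq0$ because the skew‑symmetric $J_V$ has purely imaginary spectrum), so the multiplication has no zero divisors in either slot. Hence $(\R^m,\cdot)$ is a (not necessarily associative) real division algebra with unit, so $m\in\{1,2,4,8\}$ by the Bott--Milnor and Kervaire theorem on real division algebras (equivalently, by Adams' solution of the Hopf invariant one problem). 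Combined with (a) and $m\ge2$, this gives $\dim B\in\{2,4,8\}$.

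For part (c) one needs the sharper statement that a subspace $W\subset\mathfrak{so}(m)$ of invertible‑or‑zero skew‑symmetric endomorphisms with $\dim W\ge 2$ (resp. $\dim W\ge 4$) can exist only when $4\mid m$ (resp. $8\mid m$). The intended route is again to exploit $\beta$, now as a nonsingular bilinear map $\R^d\times\R^m\to\R^m$ whose first slot is skew‑symmetric: passing to the orthogonal parts $\widehat J:=J(-J^2)^{-1/2}$ of the elements of $W$ — each an orthogonal complex structure on $\R^m$ — and analyzing the induced map from the unit sphere of $W$ into the space of orthogonal complex structures, one distills the Clifford‑module/Radon--Hurwitz obstruction responsible for the divisibility by $4$ and $8$. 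I expect (c) to be the main obstacle: unlike (a) and (b), it does not reduce to a single clean classical black box, and making it self‑contained requires either careful Clifford‑algebra bookkeeping or, as an alternative, the characteristic‑class argument of \cite[Proposition 2.5]{Ziller_fatnessrevisited}, which one may simply cite here.
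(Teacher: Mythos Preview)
The paper does not supply a proof here; it only refers to \cite[Proposition~2.5]{Ziller_fatnessrevisited}. Your arguments for (a) and (b) are correct and are essentially the classical ones.

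For (c) you are closer to done than you indicate, and the orthogonalization/Clifford detour is unnecessary: the very trick you use in (b) finishes (c) as well. The subspace $\R\,\Id\oplus\{J_V:V\in\cal V\}\subset\End(\cal H)$ has dimension $d+1$, and every nonzero element $\lambda\,\Id+J_V$ is invertible (since $J_V$ is skew-symmetric and thus has purely imaginary spectrum). For each $y\in S^{m-1}\subset\cal H$ the linear map $(\lambda,V)\mapsto \lambda y+J_Vy$ is therefore injective and sends $(1,0)$ to $y$; projecting its image orthogonally off $y$ produces a trivial rank-$d$ subbundle of $TS^{m-1}$, whence $d\le\rho(m)-1$ by Adams' vector-fields-on-spheres theorem, $\rho$ the Hurwitz--Radon function. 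Since $\rho(m)\ge 3$ iff $4\mid m$ and $\rho(m)\ge 5$ iff $8\mid m$, both assertions in (c) follow at once. If you prefer an elementary argument for the first half of (c) alone: with $J_1,J_2$ linearly independent in $W$, the function $\theta\mapsto\mathrm{Pf}(\cos\theta\,J_1+\sin\theta\,J_2)$ is continuous and nowhere zero on $[0,\pi]$, yet equals $\mathrm{Pf}(J_1)$ at $\theta=0$ and $(-1)^{m/2}\mathrm{Pf}(J_1)$ at $\theta=\pi$, forcing $m/2$ even.
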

\begin{remark}[What do we mean by a fat Riemannian submersion]\label{rem:fat}
  Throughout this manuscript, whenever considering a fat Riemannian submersion $\pi: F\hookrightarrow (M,\ga)\rightarrow (B,\ga_B)$, we mean a fiber bundle with a structure group being the holonomy group $H$ of the Riemannian submersion $\pi$ and totally geodesic fibers diffeomorphic to the homogeneous spaces $F=H/K$. The total space $M$ is the total space of an associated bundle $M\cong P\times_HF$ where $H\rightarrow P\rightarrow B$ is the principal holonomy bundle, i.e., with structure group $H$. The metric $\ga$ is a connection metric with positive vertizontal curvature.
\end{remark}

\subsection{Preliminary aspects of Cheeger deformations for associated bundles}
\label{sec:prelicheeger}

Cheeger first introduced the so-called ``Cheeger deformations'' in the 1970s as a tool to produce non-negatively curved metrics on manifolds obtained as quotients of manifolds with isometric actions. Let $(M,\ga)$ be a compact connected Riemannian manifold. Let $G$ be a compact connected Lie group of positive dimension acting by isometries on $(M,\ga)$. Taking $Q$ to be a bi-invariant metric on $G$, we consider the product manifold $M\times G$ with the product metric $\ga + t^{-1}Q$ for $t>0$.  

Notably $G$ defines an isometric action on $(M\times G,\ga+t^{-1}Q)$, which we denote as $\star$, defined by
\begin{equation}\label{eq:star}
r\star (m,g) := (rm,rg),~r\in G,~(m,g)\in M\times G.
\end{equation}
The quotient (orbit map) projection $\pi':(m,g)\rightarrow g^{-1}m$ defines a principal bundle which induces from  $\ga+t^{-1}Q$ a family of $G$-invariant Riemannian metrics (as $t$-varies) $\ga_t$ on $M$, termed Cheeger deformations of $\ga$.

Let $\mathfrak{g}_x$ denote the Lie algebra of $G_x$, the isotropy subgroup at $x$. We denote by  $\lie m_x$ the $Q$-orthogonal complement of $\lie g_x$. We observe that $\lie m_x$ is isomorphic to the tangent space to the orbit $Gx$ via \emph{action fields}. We term a vector in $T_xGx$  \emph{vertical}, so in analogy with Riemannian submersion, we denote $T_xGx:=\cal V_x$, and say that $\cal V_x$ is \emph{the vertical space at $x$}. Its $\ga$-orthogonal complement, denoted as $\cal H_x$, is named the \emph{horizontal space at $x$}. Any tangent vector $\overline X(x) \in T_xM$ can be uniquely decomposed as $\overline X(x) = X + U^{\ast}_x$, where $X$ is horizontal and $U\in \lie m_x$. In this manner, $U^*_x$ is the corresponding action vector (concerning $U$) at $x$. When there is no risk of confusion, we omit referring to $x$ everywhere.

It is typical to consider three symmetric and positive definite tensors associated with Cheeger deformations -- \cite{mutterz, Muter}
\begin{definition}\label{def:orbittensors}
	\begin{enumerate}
		\item The \emph{orbit tensor} at $x$ is the linear map $O : \lie m_x \to \lie m_x$ defined by
		\[\textsl g(U^{\ast},V^{\ast}) = Q(OU,V),\quad\forall U^{\ast}, V^{\ast} \in \mathcal{V}_x\]
		\item For each $t > 0$ the orbit tensor $O_t:\lie m_x\to \lie m_x$ is characterized by 
		\[\textsl g_t(U^{\ast},V^{\ast}) = Q(O_tU,V), \quad\forall U^{\ast}, V^{\ast} \in \mathcal{V}_x\]
		\item The \emph{metric tensor} $C_t:T_pM\to T_xM$ of $\textsl g_t$ is defined as
		\[\textsl g_t(\overline{X},\overline{Y}) = \textsl g(C_t\overline{X},\overline{Y}), \quad\forall \overline{X}, \overline{Y} \in T_xM\]
	\end{enumerate}
 \end{definition}
	\begin{proposition}[Proposition 1.1 in \cite{mutterz}]\label{propauxiliar}The tensors above satisfy:
		\begin{enumerate}
			\item \label{eq:pt} $O_t = (O^{-1} + t1)^{-1} = O(1 + tO)^{-1}$,
			\item If $\overline{X} = X + U^{\ast}$ then $C_t(\overline{X}) = X + ((1 + tO)^{-1}U)^{\ast}$.
		\end{enumerate}
	\end{proposition}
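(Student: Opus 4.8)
The plan is to carry out the whole computation upstairs, on the Riemannian principal bundle $\pi' : (M\times G,\ \ga + t^{-1}Q) \to (M,\ga_t)$, by writing down the horizontal lift (with respect to the $\star$-action \eqref{eq:star}) of an arbitrary tangent vector of $M$ and then reading off $\ga_t$ as the induced submersion metric. Fix $p\in M$; since $\star$ is transitive along the fibers of $\pi'$ we may compute over the point $(p,e)$. First I would record three elementary facts: (i) the differential $d\pi'_{(p,e)}\colon T_pM\oplus\lie g \to T_pM$ is $(v,A)\mapsto v - A^*_p$ (differentiate $g^{-1}m$); (ii) the vertical space of $\star$ at $(p,e)$ is $\{(V^*_p,V) : V\in\lie g\}$, because the $G$-component of the $\star$-action field of $V$ equals $\left.\tfrac{d}{ds}\right|_{s=0}e^{sV} = V \in T_eG$; (iii) for $W\in\lie m_p$ and $V\in\lie g$ one has $\ga(W^*_p,V^*_p) = Q(OW,V)$ (the $\lie g_p$-part of $V$ is annihilated) by the definition of the orbit tensor $O$, which is moreover $Q$-self-adjoint and $Q$-positive, so $1+tO$ is invertible for every $t>0$.

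Next I would determine the $\star$-horizontal space at $(p,e)$ with respect to $\ga+t^{-1}Q$. Decomposing $w = w^{\mathcal H} + W^*_p$ with $W\in\lie m_p$, orthogonality of $(w,B)$ to $(V^*_p,V)$ reads $Q(OW + t^{-1}B,\, V) = 0$ for all $V\in\lie g$, so $(w,B)$ is horizontal iff $B = -tOW$. Imposing $d\pi'(w,B) = w - B^*_p = \overline X$ for $\overline X = X + U^*_p$ (with $X\in\mathcal H_p$, $U\in\lie m_p$) then forces $w^{\mathcal H} = X$ and $(1+tO)W = U$, i.e. $W = (1+tO)^{-1}U$. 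Hence the horizontal lift is
\[
\widehat{\overline X} = \bigl(X + W^*_p,\ -tOW\bigr), \qquad W = (1+tO)^{-1}U .
\]

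With the lift in hand, both identities fall out of evaluating $\ga+t^{-1}Q$. Expanding $\langle \widehat{\overline X},\widehat{\overline Y}\rangle_{\ga+t^{-1}Q}$ for $\overline Y = Y + V^*_p$, $W' = (1+tO)^{-1}V$, and using $Q$-self-adjointness of $O$ together with $(1+tO)W = U$ yields
\[
\ga_t(\overline X,\overline Y) = \ga(X,Y) + Q(OW,W') + t\,Q(O^2W,W') = \ga(X,Y) + Q(OU,W') = \ga(X,Y) + Q\bigl(O(1+tO)^{-1}U,\ V\bigr).
\]
In particular the splitting $T_pM = \mathcal H_p\oplus\mathcal V_p$ stays $\ga_t$-orthogonal, and comparing with the defining relation $\ga_t(U^*_p,V^*_p) = Q(O_tU,V)$ gives $O_t = O(1+tO)^{-1}$; rewriting $O(1+tO)^{-1} = \bigl((1+tO)O^{-1}\bigr)^{-1} = (O^{-1}+t1)^{-1}$ gives part (1). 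For part (2), write $C_t(X+U^*_p) = X' + (U')^*_p$ using the $\mathcal H_p\oplus\mathcal V_p$ decomposition; then $\ga(C_t\overline X,\overline Y) = \ga(X',Y) + Q(OU',V)$, and matching this against the displayed formula for $\ga_t(\overline X,\overline Y)$ over all $Y\in\mathcal H_p$, $V\in\lie m_p$ forces $X' = X$ and $OU' = O_tU$, whence $U' = (1+tO)^{-1}U$, which is exactly part (2).

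The argument is bookkeeping rather than insight, so the real concern is not difficulty but error control: the place where a mistake propagates is the combination of facts (ii)--(iii) with the $t^{-1}$ scaling of the $Q$-summand, since a misplaced factor of $t$ changes $O_t$. I would guard against this with the two limiting sanity checks that the formula $(O^{-1}+t1)^{-1}$ must pass, namely $O_t\to O$ as $t\to 0$ (so $\ga_t\to\ga$) and $O_t\to 0$ as $t\to\infty$ (the orbit directions collapse), both of which it visibly satisfies.
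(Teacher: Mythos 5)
Your computation is correct: the horizontal lift $\bigl(X + W^*_p,\,-tOW\bigr)$ with $W=(1+tO)^{-1}U$ and the resulting formula $\ga_t(\overline X,\overline Y)=\ga(X,Y)+Q\bigl(O(1+tO)^{-1}U,V\bigr)$ give both identities exactly as claimed. The paper itself offers no proof (it cites Proposition 1.1 of M\"uter's thesis), and your argument is essentially the standard one from that source — the same lift-and-project bookkeeping on $(M\times G,\ \ga+t^{-1}Q)$ — so there is nothing to add.
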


We move to the more general concept of Cheeger deformations presented in \cite{Cavenaghi2022}, applied to associated bundles. Our interest in this to-be-present concept relies on Proposition \ref{prop:fatfiber}. Consider a fiber bundle $F\hookrightarrow M\stackrel{\pi}{\to}B$ with a compact structure group $G$ and fiber $F$. If $G$ acts effectively on $F$, then $G$ is a structure group for $\pi$ if there is a choice of local trivializations $\{(U_i,\phi_i:\pi^{-1}(U_i) \to U_i\times F)\}$ 
	such that, for every $i,j$ with $U_i\cap U_j \neq \emptyset$ there is a continuous function $\varphi_{ij} : U_i\cap U_j \to G$ satisfying
	\begin{equation*}
	\phi_i\circ\phi_j^{-1}(b,f) = (b,\varphi_{ij}(b)f),
	\end{equation*}
	for all $b\in U_i\cap U_j$. 
The existence of the collection \(\{\varphi_{ij}\}\) enables us to construct a principal \(G\)-bundle over the base space \(B\). This bundle is defined as \(P = \sqcup U_i \times G / \sim\), where the equivalence relation \(\sim\) is specified as follows: \((b, f) \sim (b', f')\) if and only if \(b = b'\) and \(f' = \varphi_{ij}(b)f\) for some indices \(i\) and \(j\). Refer to \cite[Proposition 5.2]{knI} for further details.

It is also important to note that if the fibers \(F\) are homogeneous, it is proved in \cite{Guijarro2007} that by fixing a point \(b_0 \in B\), we can describe the bundle \(P\) as the set of all diffeomorphisms \(h: F_{b_0} \rightarrow F_b\) for \(b \in B\). Here, the bundle projection \(\pi_P(h: G_{b_0} \rightarrow F_b)\) is given by \(\pi_P(h) = b\). This description holds true when \(\pi\) is a fat Riemannian submersion with totally geodesic fibers.    

    From $P$ we can consider another principal $G$-bundle $\overline\pi:P\times F \to M$ whose principal $G$-action is given by
\begin{equation}\label{eq:action}
	r\star(p,f) := (r\cdot p, rf),~r\in G,~p\in P,~f\in F.
	\end{equation} 
	(For the details, see the construction on the proof of \cite[Proposition 2.7.1]{gw}.)
	
Let $\ga_P$ and $\textsl g_F$ be a pair of $G$-invariant metrics on $P$ and $F$, respectively. We can consider the metric $\ga$ on $M$ as the unique (up to scale) connection metric which makes \begin{equation}\label{eq:projection}
\overline\pi : (P\times F,\ga_P+\textsl g_F)\rightarrow (M,\ga)
\end{equation} a Riemannian submersion. Denote by $\mathcal{M}$ the set of all such metrics $\ga$ obtained in the above manner, i.e., $\cal M:=\{\ga=\bar\pi_{\ast}(\ga_P{+}\ga_F)\}$ where $\ga_P$ is a $G$-invariant Riemannian metric on $P$, and $\ga_F$ is a $G$-invariant Riemannian metric on $F$.
\begin{lemma}[Proposition 1.6 in \cite{Ziller_fatnessrevisited}]\label{lem:inducedmetric}
    Let $F\rightarrow (M,\ga)\stackrel{\pi}{\rightarrow} (B,\ga_B)$ be a Riemannian submersion with totally geodesic fibers. If the holonomy group $H$ (at some point) of $\pi$ acts transitively by isometries on $F$, then $\ga$ belongs to $\cal M$. That is, there exists $H$-invariant metrics $\ga_P$ on $P$ and $\ga_F$ on $F$ such that $\ga=\bar\pi_{\ast}(\ga_P+\ga_F)$. 
\end{lemma}

	\begin{definition}[Generalized Cheeger deformations for fat Riemannian submersions]\label{defn}
 Let $F\rightarrow (M,\ga)\stackrel{\pi}{\rightarrow} (B,\ga_B)$ be a fat Riemannian submersion. Let $\textsl g_F$ be the $H$-invariant metric on $F$ and $\ga_P$ be the $H$-invariant metric on $P$ which induces $\ga$ -- Lemma \ref{lem:inducedmetric}.  For each $t\geq 0$ let $(\ga_P)_t$ as the time $t$ Cheeger deformation of $\ga_P$.
 
 The \emph{Cheeger deformation of $\ga$ is the unique (up-to-scale) Riemannian metric} $\ga_t$ that makes $\bar\pi: (P\times F,(\ga_P)_t+\ga_F)\rightarrow (M,\ga_t)$ to be a Riemannian submersion.
	\end{definition}

Fix $(p,f) \in P\times F$ and consider any $\overline{X}\in T_{(p,f)}(P\times F)$. Let $\mathfrak{m}_f$ denote the tangent space of the orbit of $G$ at $f\in F$. We can express $\overline{X}$ as $(X+V^{\vee},X_F+W^*)$, where $X$ is orthogonal to the $G$-orbit on $P$ and $X_F$ is orthogonal to the $G$-orbit on $F$. In this context, $V^{\vee}$ and $W^{\ast}$ represent the action vectors relative to the $G$-actions on $P$ and $F$, respectively. We abuse notation identifying $\mathrm{d}\bar\pi_{(p,f)}(X,X_F+U^*)\equiv X+X_F+ U^*$. 
\begin{definition}\label{def:orbittensors2}
    Let $O$, $O_F$, and $O_t$ be the orbit tensors associated with $\textsl g$, $\textsl g_F$, and $\textsl g_t$ -- Definition \ref{def:orbittensors}. Similarly to the orbit and metric tensors in a Cheeger deformation, we consider those corresponding to the deformation defined in Definition \ref{defn}. Specifically, we define $\widetilde O_t$ and $\widetilde C_t:\mathfrak{m}_f\to \mathfrak{m}_f$ in terms of $O, O_F, O_t$:
	\begin{align*}
	\widetilde O_t & := O_F(1+O_t^{-1}O_F)^{-1}=(O_F^{-1}+O_t^{-1})^{-1},\\
	\widetilde C_t & := -C_tO_t^{-1}\widetilde O_t=-O^{-1}\widetilde O_t.
	\end{align*}	
\end{definition}
  \begin{lemma}[Claim 3 in \cite{Cavenaghi2022}]
     \label{claim:lift}
		Let $\cal L_{\overline\pi}: T_{\overline\pi(p,f)}M \to T_{(p,f)}(P\times F)$ be the horizontal lift associated with $\overline\pi$. Then,
		\begin{equation}
		\cal L_{\overline\pi}(X+X_F+U^*)=(X-(O_t^{-1}\widetilde O_tU)^\vee,X_F+(O_F^{-1}\widetilde O_tU)^*).	
		\end{equation}
 \end{lemma}

 From the introduced concepts, one can infer a useful formula for the sectional curvature of fat Riemannian submersions:
\begin{theorem}[Theorem 3.1 and Lemma 3 in \cite{Cavenaghi2022}]
Let $\pi: F\hookrightarrow (M,\ga)\rightarrow (B,\ga_B)$ be a fat Riemannian submersion. The sectional curvature $K_{\ga}$ of $\ga$ can be expressed as
     \begin{equation}\label{eq:necessaria}
	K_{\ga}(\widetilde X,\widetilde Y)=K_{\ga_P}(X{+}U^{\vee},Y{+}V^{\vee}){+}K_{\textsl g_F}(X_F - (O_F^{-1}OU)^*, Y_F - (O_F^{-1}OV)^*){+}\widetilde z_0(\widetilde X,\widetilde Y).
	\end{equation}
	where $\widetilde z_0$ is a non-negative term. If $X_F=Y_F=V^*=0$ then $\widetilde z_0(\widetilde X,\widetilde Y)=0$.  
\end{theorem}

 \section{Some new rigidity results for fat bundles}
\subsection{Fat Riemannian submersions}
 Our first new structure result is the following.

 \begin{proposition}\label{prop:finishing}
Consider the fat Riemannian submersion $\pi: F\hookrightarrow(M,\ga)\rightarrow (B,\ga_B)$. If $M$ has non-negative sectional curvature ($\mathrm{sec}(\ga) \geq 0$) and $F$ is a symmetric space of dimension greater than one, then $F$ has positive sectional curvature at the normal homogeneous space metric\footnote{Recall that a normal homogeneous metric on a homogeneous space $H/K$ is the quotient metric induced from a bi-invariant metric on $H$ where $K$ acts isometrically, effectively and transitively}.
\end{proposition}
\begin{proof}
    
Once $\pi$ is fat, $F$ can be represented as a homogeneous space $H/K$, where $H$ is the holonomy group of the submersion $\pi$ at some point $b\in B$. Proposition \ref{prop:fatfiber} ensures $M\cong P \times_H (H/K)$, where $H\hookrightarrow P \rightarrow P/H$ is the $\pi$-associated holonomy principal bundle. Picking a bi-invariant metric $Q_H$ on $H$, consider the Riemannian submersion below, where $\bar Q$ stands for the normal homogeneous space metric induced from $Q_H$:
\[
K \hookrightarrow (H,Q_H) \rightarrow (H/K,\bar Q).
\]
Being $Q_H$ a bi-invariant metric, it has non-negative sectional curvature. Thus, $\bar Q$ also has non-negative sectional curvature. Next, we study the curvature of a non-degenerate vertizontal plane tangent to $M$ for the metric $\ga$. 

Once $\ga$ is induced via a choice of $H$-invariant metric $\ga_P$ on $P$ that makes  $\bar \pi: (P\times (H/K),\ga_{P}+\bar Q)\rightarrow (M,\ga)$ be a Riemannian submersion, we rely on Equation \eqref{eq:necessaria}. A non-degenerate vertizontal plane can be written in the form $X\wedge V^*$ for $V\in \lie h\ominus \lie k:= \lie h\cap (\lie k)^{\perp_{Q_H}}$, where $\lie h$ is the Lie algebra of $H$ and $\lie k$ the Lie algebra of $K$. Decomposing $TP = \cal H^P\oplus \cal V^P$ according to the submersion $H\hookrightarrow P\rightarrow P/H$, let $(A^P)^*_X$ be the $\ga_P$-dual to $\h[X,\cdot]^{\cal V^P}$. We have
\[
K_{\ga}(X,V^*) = K_{\ga_P}(X,V^{\vee}) = |(A^P)^*_XV^{\vee}|{\ga_P}^2.
\]
Therefore, $\pi : M\cong P\times_H(H/K) \rightarrow P/H$ is fat if, and only if, the submersion $\pi : P \rightarrow P/H$ is \emph{$K$-fat}. That is, the following two-form is non-degenerate for every non-zero $U \in \lie h\ominus \lie k$:
\begin{equation}
(X, Y) \mapsto Q_H(\Omega(X,Y),U),
\end{equation}
where $(\Omega(X,Y))^{\vee} = -[X,Y]^{\cal V^P}$ (compare with Definition 2.8.2 in \cite[p. 106]{gw}). Hence, for any fixed $0 \neq X\in \cal H^P_p \subset T_pP$, there is an injection $(\Omega(X,\cdot))^{\top} : \cal H^P_p\cap \{X\}^{\perp} \rightarrow \lie h\ominus\lie k$, given by the $Q_H$-orthogonal projection of $\mathrm{Im}~\Omega(X,\cdot)$ onto $\lie h\ominus \lie k$. On the other hand, following Proposition \ref{prop:dimcons}, it holds $\dim \lie h\ominus \lie k = \dim H/K \leq \dim P/H -1 = \dim \cal H^P-1$. Thus, $\dim P/H-1 = \dim H/K$ and $\dim P/H = 2, 4, 8$, and so $\dim H/K = 1, 3, 7$. We finish the proof using this former information and a contradiction argument.

 Assuming that $\dim(F{=}H/K) >1$, we can find vectors $U$ and $V$ in $\lie h\ominus \lie k$ with $|U|_{Q_H} = |V|_{Q_H} = 1$ and $Q_H(U,V) = 0$. As $H/K$ is a symmetric space, we have $[U,V]^{\lie h\ominus \lie k} = 0$. By O'Neill's submersion formula, we get
\[
\mathrm{sec}_{\bar Q}(U,V) = \frac{1}{4}|[U,V]|^2_{Q_H} +\frac{3}{4}|[U,V]^{\lie k}|_{Q_H}^2 = |[U,V]^{\lie k}|_{Q_H}^2.
\]
We claim that $[U,V]^{\lie k}\neq 0$. 

Assuming by contradiction that $[U, V]^{\lie k} = 0$, then $U$ and $V$ lie in the Cartan subalgebra of $\lie h$, and $H/K$ is a symmetric space of rank greater than $1$. If $\dim(P/H) = 4$, then $\dim(H/K) = 2, 3$ so $\mathrm{rank}(H/K) = \dim(H/K) = 2$ implies that either $H/K$ is diffeomorphic to a Euclidean space, which contradicts compactness, or it is not simply connected and hence diffeomorphic to a torus. In the former case, the induced map at the fundamental group level
\[
(\bar\pi)_{\ast} : \pi_1(P\times H/K,(p,f)) \rightarrow \pi_1(M,\bar\pi(p,f))
\]
from $\bar\pi: P\times F \rightarrow M$ is an epimorphism implying that $M$ has an infinite fundamental group. However, according to our hypotheses, since $\ga$ has non-negative sectional curvature, the Bonnet--Myers Theorem implies that $\ga$ is necessarily flat, which contradicts fatness.

We then must have $\dim{H/K} = 3$. However, this is impossible since there is no simply connected 3-dimensional symmetric space of rank 2 -- see Table 2 in \cite[p. 354]{HELGA}. The case $\dim H/K = 7$ follows similarly. \qedhere
\end{proof}

\subsection{Fat submersions and ``twisted foliations''}
Let $\pi: F\hookrightarrow M\rightarrow B$ be a submersion with compact fibers and a chosen horizontal distribution $\cal H$. The vertical distribution $\cal V$ collects pointwise $\ker~\mathrm{d}\pi$. We assume that for any $x\in M$ and any non-zero $X\in \cal H_x$, it holds for any horizontal extension $\widetilde X$ of $X$ that \[[\widetilde X,\cal H_x]^{\mathbf{v}}=\cal V_x.\]

 \begin{definition}
     For each $x\in M$ we call
\[L^{\#}_x := \{y \in M : \text{there exists a piecewise smooth horizontal curve from $x$ to $y$}\}\]
the \emph{dual-leaf through $x$}. To the collection $\cal F^{\#} = \{L^{\#}_x : x \in M\}$ we refer \emph{the dual foliation} associated with the foliation $\cal F = \{\text{connected components of the fibers } F_x : x \in M\}$.
 \end{definition}

Collecting the connected components of the fibers of a submersion $F\hookrightarrow M\rightarrow B$ yields a foliation. If a Riemannian metric is in place for which two leaves are locally equidistant, we have a Riemannian foliation. 
\begin{definition}[\cite{Angulo-Ardoy2013-pk}]\label{def:twisted}
    A Riemannian foliation is termed \emph{twisted} if it has only one dual leaf.
\end{definition}

\begin{theorem}\label{cor:guijarao}
    Let $\pi: F\hookrightarrow M\rightarrow B$ be a fat submersion. Pick any Riemannian metric $\ga$ on $M$ making the connected components of $\{F_x:x\in M\}$ a Riemannian foliation $\cal F$. Then, $\cal F$ is twisted. 
\end{theorem}
We must digress to prove Theorem \ref{cor:guijarao}, introducing other required results. Before this intercourse, we remark that Theorem \ref{cor:guijarao} generalizes the examples in \cite{Angulo-Ardoy2013-pk}, including
\begin{theorem}[Corollary 3 in \cite{Angulo-Ardoy2013-pk}]
    Every principal fat bundle $M$ over $B$ with an invariant metric of
non-negative curvature is twisted.
\end{theorem}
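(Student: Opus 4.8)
The plan is to deduce the statement directly from Theorem \ref{thm:regularization} (equivalently, from Corollary \ref{cor:guijarao}); the point is that for a \emph{principal} bundle the compactness of the holonomy group is automatic, so no extra hypothesis beyond fatness is needed. Let $\pi : M\to B$ be a principal $G$-bundle carrying a $G$-invariant metric $\ga$ with $\mathrm{sec}(\ga)\geq 0$ for which $\pi$ is a Riemannian submersion. First I would record that $G$ is compact: this holds whenever $M$ is compact (the fibre is then a closed submanifold of $M$), in particular when $B$ is compact, which is the situation in this setting. Since $\ga$ is $G$-invariant and $\pi$ is Riemannian, the $\ga$-orthogonal complement $\cal H$ of the fibres is a $G$-invariant distribution, i.e. a principal connection; hence parallel transport along it is $G$-equivariant and the holonomy group $H$ of the Riemannian foliation $\cal F=\{F_x\}$ by fibres is a subgroup of the structure group $G$, thus compact. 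If $G$ is disconnected I would replace it by its identity component $G^0$, so that the leaves of $\cal F$ (which are connected by our convention) are cosets of the connected compact group $G^0$; fatness is unaffected since it is a pointwise condition on $\cal V$ and $\cal H$ alone.

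With these observations the hypotheses of Theorem \ref{thm:regularization} are satisfied: $\pi$ is a fat submersion with connected fibre and compact holonomy group. Part (1) of that theorem then gives that for \emph{any} Riemannian submersion metric on $M$ the dual foliation has a single leaf; applying this to the given metric $\ga$ shows that $\cal F$ is twisted, which is the assertion. I would emphasise at this point that the hypothesis $\mathrm{sec}(\ga)\geq 0$ has not been used — this is precisely the sense in which Theorem \ref{thm:regularization} and Corollary \ref{cor:guijarao} generalize the quoted result.

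The step that deserves care — and the reason the proof is not a one-line invocation of Corollary \ref{cor:symmetricimposed} — is that the fibres of a principal bundle with a general connection metric need \emph{not} be totally geodesic: they are the orbits of an isometric $G$-action, and such orbits are in general not totally geodesic. Hence Theorem \ref{thm:llohannunpublished}, and with it the identification $\cal V_p=\mathrm{span}_{\bb R}\{A_XY(p):X,Y\in\cal H_p\}$ together with the Ambrose--Singer computation of Theorem \ref{thm:AS}, is not directly applicable to $\ga$ itself; so the main obstacle is to bypass the totally-geodesic hypothesis. This is handled exactly as in the proof of Theorem \ref{thm:regularization}: one Cheeger-deforms the fibre and base metrics, keeping $\cal H$ fixed, to reach a compatible metric with totally geodesic leaves and still positive vertizontal curvature (here fatness is used), applies Theorem \ref{llohannzinho} to that metric to obtain a single dual leaf, and transfers the conclusion back to $\ga$ since the dual foliation depends only on $\cal H$. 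Alternatively, one may use Lemma 2 of \cite{Angulo-Ardoy2013-pk}, by which fatness forces every dual leaf to be open, so that connectedness of $M$ leaves room for only one.
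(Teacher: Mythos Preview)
Your proposal is correct and matches the paper's approach exactly: the paper presents this theorem as an immediate consequence of Theorem~\ref{thm:regularization}/Corollary~\ref{cor:guijarao}, noting only that ``We have generalized the following result''. Your extra care in verifying compactness of the holonomy group (via compactness of the structure group $G$) and in explaining why the totally geodesic hypothesis must be bypassed through the regularization procedure is more explicit than the paper itself, but the underlying argument is the same.
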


In principal bundles, once a \( G \)-invariant connection \( \widetilde{\mathcal{H}} \) is fixed with a curvature two-form \( \Omega \), the Ambrose–Singer Theorem ensures that, after proper identification, the intersection \( TL_x^{\#} \cap \mathcal{V} \) is spanned by \( \Omega(X, Y) \), where \( X, Y \in \Gamma(\widetilde{\mathcal{H}}) \). In the case of manifolds with foliations, however, the connection and curvature two-form are generally absent. Instead, a natural alternative is the O’Neill A-tensor, whose image resides in different fibers of \( \mathcal{V} \). Sperança achieved a result (Theorem \ref{thm:llohannunpublished}) that can be considered a variation of the Ambrose--Singer result. See Appendix \ref{ap:llohann} for a proof sketch.
\begin{theorem}[Theorem 1.3 in \cite{speranca_grove}]\label{thm:llohannunpublished}
	Let $(M,\ga)$ be a compact Riemannian manifold with non-negative sectional curvature with a Riemannian foliation $\cal F$ of totally geodesic leaves. For each each $x\in M$ it holds that \[T_xL_x^{\#}\cap \cal V_x = \mathrm{span}\left\{[X,Y]^{\mathbf{v}} : X,Y \in \cal H_x\right\}.\] 
	\end{theorem}

In general, Riemannian foliations need not be globally recovered as the connected components of the fibers of a Riemannian submersion. However, infinitesimal data can be recovered from \emph{holonomy fields} -- Equation \eqref{eq:holonomyfield}. Let $(M,\ga)$ be a Riemannian manifold with a Riemannian foliation $\cal F$. As in the case of Riemannian submersions, we decompose $TM=\cal V\oplus\cal H$ where $\cal V$ comprises the sub-bundle collecting tangent vectors to the leaves of $\cal F$, $\cal H$ is $\ga$-orthogonal to $\cal V$ pointwise. Let $c$ be a horizontal curve, i.e., $\dot c(t)\in\cal H_{c(t)}$. Therein we keep denoting by $A$ the O'Neill tensor $\cal H_x\times \cal H_x\ni (X,Y)\mapsto \h[X,Y]^{\mathbf{v}}=:A_XY,~\forall x \in M$ and by $A^*_X$ the $\ga$-dual to $A_X$. We name a Riemannian foliation $\mathcal F$ \emph{fat} if Equation \eqref{eq:fat} holds analogously.
\begin{definition}[Holonomy fields and Dual Holonomy fields]
    An \emph{holonomy field} $\xi$ along $c$ is a vertical field satisfying
\begin{equation}\label{eq:holonomyfield}
\nabla_{\dot c}\xi=-A^*_{\dot c}\xi-S_{\dot c}\xi,
\end{equation}
where $S\co \Gamma(\cal H)\times\Gamma(\cal H)\to \Gamma(\cal V)$ is the \emph{second fundamental form of the fibers:}
\begin{equation*}
S_X\xi=-(\nabla_\xi \widetilde X)^{\mathbf v}
\end{equation*}
with $\widetilde X$ any horizontal extension of $X$. A vertical field $\nu$ along $c$ is called a \emph{dual holonomy field} if\begin{equation}\label{eq:dualhol}
\nabla_{\dot c}\nu=-A^*_{\dot c}\nu+S_{\dot c}\nu.
\end{equation}
\end{definition}

For a horizontal curve $c:[0,1]\to M$, let $h:\cal V_{c(0)}\to \cal V_{c(1)}$ be the linear isomorphism given by $h(\xi_0)=\xi(1)$, where $\xi(t)$ is the holonomy field along $c$ with initial condition $\xi(0)=\xi_0$. Following \cite{speranca2017on}; we call $h$ an \textit{infinitesimal holonomy transformation}.

Let $\cal E$ collect all infinitesimal holonomy transformations defined by $\cal F$. $\cal E$ is naturally included in 
\begin{gather}
\rm{Aut}(\cal V)=\{h:\cal V_x\to\cal V_y~|~x,y\in M, h\in\rm{Iso}(\cal V_x,\cal V_y)\} ,
\end{gather}
where $\rm{Iso}(\cal V_x,\cal V_y)$ stands for the set of linear isomorphisms between $\cal V_x$ and $\cal V_y$. The natural operations on $\rm{Aut}(\cal V)$ define a groupoid structure, the source and target maps being defined on $h:\cal V_x\to \cal V_y$ by $\sigma(h)=x$ and $\tau(h)=y$, respectively. Moreover, $\cal E$ is closed by  composition and inversion in $\rm{Aut}(\cal V)$: if $h:\cal V_x\to \cal V_y$ is realized by the horizontal curve $c$ and $h':\cal V_y\to \cal V_z$ is realized by $c'$, then $h'\circ h$ is realized by the concatenation of $c$ and $c'$; $h^{-1}$ is realized by the curve $\tilde c$ defined by $\tilde c(t)=c(1-t)$.
The identity section $p\mapsto \rm{id}_{\cal V_x}$ is realized by constant curves.

The topology considered in $\rm{Aut}(\cal V)$ is the topology defined by the submersion $\sigma\times \tau:\rm{Aut}(\cal V)\to M\times M$ along with the operator norm on $\rm{Iso}(\cal V_x,\cal V_y)$ induced by the metric on $M$.  The space $\cal E$  inherits a topology and a groupoid structure from $\rm{Aut}(\cal V)$. 
The analogous corresponding of the ``compact holonomy group'' for general Riemannian foliations is given in the following.
 \begin{definition}[Bounded Holonomy]\label{def:bhol}
We say that a Riemannian foliation has bounded holonomy if there is a constant $L$ such that,  for every holonomy field $\xi$, $|\xi(1)|\leq L|\xi(0)|$.
\end{definition}
    \begin{example}[Proposition 3.4 in \cite{speranca2017on}]
     If the structure group of a given Riemannian submersion $\pi:(M,\ga)\rightarrow (B,\ga_B)$ with compact connected total space is compact, the Riemannian foliation induced by the connected component of the fibers has bounded holonomy. In particular, any fat Riemannian foliation satisfies the bounded holonomy condition.
\end{example}

			\begin{theorem}[Theorem 6.2 in \cite{speranca2017on}]\label{llohannzinho}
	Let $(M,\ga)$ be a compact connected Riemannian manifold with a Riemannian foliation $\cal F$ of bounded holonomy and positive vertizontal curvature. Then, the dual foliation $\cal F^{\#}$ has only one leaf. 
	\end{theorem}

 We are finally in the position to prove Theorem \ref{cor:guijarao}.
\begin{proof}[Proof of Theorem \ref{cor:guijarao}]
    By hypotheses, the Riemannian foliation $\cal F$ given by the connected components of the fibers $\{F_x\}_{x\in M}$ is fat, so that Proposition \ref{prop:fatfiber} ensures that $F$ is diffeomorphic to the homogeneous space $F = H/K$ where $H$ is the holonomy group of $\pi$. It also holds $M \cong P\times_H(H/K)$ where $P$ is the $\pi$-associated reduced holonomy bundle.

Denote by $\bar \pi : P\times (H/K)\rightarrow M$ the quotient projection. Since $F$ is a homogeneous manifold for which $H$ acts isometrically, it holds that there exists $\textsl g_P$ and $\textsl g_F$ such that $\ga$ decouples as a product by the $\bar \pi$-pullback:
\[{\bar{\pi}}^{\ast}(\ga) = \textsl{g}_P+\textsl g_F,\]
where $\textsl g_P$ is a $H$-invariant metric on $P$ and $\textsl g_F$ is an $\mathrm{Ad}(K)$-invariant metric on $F$ -- this is the content of Lemma \ref{lem:inducedmetric}. Performing corresponding Cheeger deformations on $\ga_P$ and $\ga_F$ followed by a canonical variation of each metric will induce, via the deformation introduced in Definition \ref{defn}, a one-parameter family of Riemannian metrics on $M$, which converges to a metric with totally geodesic leaves -- \cite[Theorem 3.2]{Cavenaghi2022}. Theorem \ref{llohannzinho} ensures the existence of only one dual leaf to the corresponding dual foliation since the horizontal distribution is never changed for the described procedure. \qedhere
\end{proof}

 In the Appendix \ref{ap:llohann}, for readers' convenience and completeness, we both sketch a proof for Theorem \ref{thm:llohannunpublished} and add some comments on Riemannian foliations on non-negatively curved manifolds. Before moving to the next section, as shall be useful, we observe that the combination of the Theorems \ref{thm:llohannunpublished} and \ref{llohannzinho} recovers a geometrical connection between Ambrose--Singer's Theorem and fatness:	
	\begin{corollary}\label{cor:symmetricimposed}
	Let $(M,\ga)$ be a compact connected Riemannian manifold with a fat Riemannian foliation $\cal F$ of bounded holonomy and totally geodesic leaves. If $\ga$ has non-negative sectional curvature then for each $x\in M$ it holds that $\cal V_x = \mathrm{span}\left\{A_XY(x) : X,Y \in \cal H_x\right\}$.
	\end{corollary}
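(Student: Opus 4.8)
The plan is to combine the two preceding results of Sperança directly. First I would observe that the hypotheses of Corollary~\ref{cor:symmetricimposed} are designed so that both Theorem~\ref{llohannzinho} and Theorem~\ref{thm:llohannunpublished} apply: $(M,\ga)$ is compact, connected, carries a Riemannian foliation $\cal F$ of bounded holonomy and totally geodesic leaves, and $\ga$ has non-negative sectional curvature. The fatness of $\cal F$ is the extra ingredient we exploit; by equation~\eqref{eq:vertizontal}, in the presence of totally geodesic leaves fatness forces every non-degenerate vertizontal plane $X\wedge V$ to satisfy $K_{\ga}(X,V) = |A^*_XV|^2 > 0$, i.e. $\cal F$ has positive vertizontal curvature.

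Next I would apply Theorem~\ref{llohannzinho} with these observations: since $\cal F$ has bounded holonomy and positive vertizontal curvature, the dual foliation $\cal F^{\#}$ has only one leaf, which therefore must be all of $M$. In particular, for every $x\in M$ the dual leaf $L_x^{\#}$ equals $M$, so $T_xL_x^{\#} = T_xM$ and hence $T_xL_x^{\#}\cap \cal V_x = \cal V_x$. Then I would invoke Theorem~\ref{thm:llohannunpublished}, whose hypotheses (compactness, non-negative sectional curvature, totally geodesic leaves) are all satisfied: it gives $T_xL_x^{\#}\cap \cal V_x = \mathrm{span}\{A_XY(x) : X,Y\in \cal H_x\}$. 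Chaining the two equalities yields $\cal V_x = \mathrm{span}\{A_XY(x) : X,Y\in \cal H_x\}$, which is exactly the claim.

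Since essentially all the analytic work has been black-boxed into the two cited theorems, the only genuine points to verify carefully are the two implications fatness~$\Rightarrow$~positive vertizontal curvature (immediate from \eqref{eq:vertizontal} together with the injectivity statement (a) in the characterization of fatness) and ``single dual leaf''~$\Rightarrow$~$L_x^{\#}=M$ for every $x$ (immediate, since the dual leaves partition $M$). The main obstacle, such as it is, is purely bookkeeping: making sure the standing hypotheses in the statement of Corollary~\ref{cor:symmetricimposed} line up verbatim with those required by Theorems~\ref{llohannzinho} and~\ref{thm:llohannunpublished}, so that no additional regularity or completeness assumption is silently needed. I would write the proof as a short two-line deduction, citing \eqref{eq:vertizontal}, Theorem~\ref{llohannzinho}, and Theorem~\ref{thm:llohannunpublished} in that order.
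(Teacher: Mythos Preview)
Your proposal is correct and matches the paper's own approach exactly: the paper presents Corollary~\ref{cor:symmetricimposed} as an immediate consequence of combining Theorem~\ref{llohannzinho} (fatness plus totally geodesic leaves gives positive vertizontal curvature, hence a single dual leaf) with Theorem~\ref{thm:llohannunpublished} (identifying $T_xL_x^{\#}\cap\cal V_x$ with the span of the $A_XY$). No separate proof is written out in the paper beyond the sentence ``the combination of the two former results recovers a geometrical connection between the Ambrose--Singer's Theorem and fatness.''
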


\subsection{A rigidity result for non-negatively curved fat Riemannian submersions}

As our last result for this section, we prove
\begin{theorem}\label{thm:symmetricfibers}
The fiber $F$ of a fat Riemannian submersion with totally geodesic leaves $\pi: F\hookrightarrow (M,\ga) \rightarrow (B,\ga_B)$ is a symmetric space if $\mathrm{sec}(\ga) \geq 0$.
\end{theorem}
\begin{proof}
Since the fibers are totally geodesic and $\ga$ has non-negative sectional curvature, it must hold that for every $x \in M$ (Corollary \ref{cor:symmetricimposed}): \begin{equation}\label{eq:aquiobstruiu}\cal V_x =: T_x(K/H) \cong \mathrm{span}_{\bb R}\left\{A_XY(x): X, Y \in \cal H_x\right\}.\end{equation}

Using that $F$ is a homogeneous space obtained out of the holonomy group of the foliation, it must hold that $\ga$ is obtained out of a product metric $\textsl{g}_P{+}\textsl{g}_F$ on $P\times F$ such that $\overline\pi:(P\times F,\textsl g_P + \textsl g_F)\to (M,\textsl g)$ is a Riemannian submersion. Moreover, any $\overline{X}\in T_{(p,f)}(P\times F)$ can be written as $\overline{X} = (X+V^{\vee},X_F+W^*)$, where $X$ is orthogonal to the $H$-orbit on $P$ (that is, it is horizontal for the metric $\ga_P$), $X_F$ is orthogonal to the $H$-orbit on $F$ (that is, horizontal for the metric $\ga_F$) and, for $V,W\in\lie h$, the vectors $V^{\vee}$ and $W^{\ast}$ are the action vectors relative to the $H$-actions on $P$ and $F$, respectively. Note that a horizontal vector in $T_xM$ can be written as $X+X_F$.

Let $\cal L_{\overline\pi}: T_{\overline\pi(p,f)}M \to T_{(p,f)}(P\times F)$ be the horizontal lift associated with $\overline\pi$, with $\bar \pi(p,f) = x$. If $\mathrm{p}_1 : TP\times TF\rightarrow TP$ denotes the first factor projection, Lemma \ref{claim:lift} ensures that $\mathrm{p}_1\circ \cal L_{\bar \pi}(X+X_F) = X$. Therefore, $\mathrm{p}_1\circ \cal L_{\bar \pi}$ maps the horizontal space $\cal H_x$ onto the horizontal space of $P$ at $p$ with respect to $\ga_P$. Since $F$ is a homogeneous space, it holds that $X_F \equiv 0$ and hence, $\mathrm{p}_1\circ \cal L_{\bar \pi}$ defines an isomorphism between the horizontal space $\cal H_x$ and the $\ga_P$-orthogonal complement to the $H$-orbit through $p$, that we denote by $\cal H^P_p$.

Fixed $h\in H$, denote by $\rho_h : P \rightarrow P$ the $h$-action on $P$. That is, $\rho_h(p) = h\cdot p$. Observe that $(\rho_h)_{\ast}(\cal H^{P}_p) \cong \cal H^{P}_{h\cdot p}$. Consider the induced isometric immersion metric on $F$ and denote by $T_{h\cdot f}F$ the tangent space of $F$ at $h\cdot f$.  

If $X, Y \in \cal H^P_p$ then $d\rho_h(X), d\rho_h(Y) \in \cal H^P_{h\cdot p}$ and using the isomorphism between $\cal H_x$ and $\cal H^P_{h\cdot p}$ we abuse notation to see that
$\cal V_x\cap T_{h\cdot f}F$  is spanned by $A_{d\rho_hX}d\rho_hY$. Moreover, since $H$-defines an isometric action on the fiber $F$, keeping in mind the notation abuse we can check that $A_{d\rho_hX}d\rho_hY = (\rho_h)^{\ast}(A_XY)$. That is,
\begin{align*}
\cal V_x\cap T_{h\cdot f}F\cong \mathrm{span}_{\bb R}\left\{d\rho_{h}\left(A_{X}Y\right)(x) : X, Y \in \cal H_x\right\}.
\end{align*}

Let $\phi : \cal V_x\cap T_fF \rightarrow \cal V_{h\cdot x}\cap T_{h\cdot f}F$ be the linear isometry obtained by extending linearly the map
\[\phi(A_XY(x)) := -(\rho_h)^{\ast}(A_XY(x)), X, Y \in \cal H_x.\]
To see that this is an isometry, it suffices to note that
\[|\phi(A_XY(x))|_{\ga_F} = |(\rho_h)^{\ast}(A_XY(x))|_{\ga_F} = |A_XY(x)|_{\ga_F},\]
since the $H$-action on the fibers is isometric.

Let $\sigma(f') := \exp^F_{h\cdot f}~\circ~\phi~\circ~(\exp_f^F)^{-1}(f')$ where $\exp^F_f$ denotes the exponential map of the isometrically induced metric on $F$ with domain at $T_fF$ and $\exp^F_{h\cdot f}$ is the corresponding exponential map with domain at $T_{h\cdot f}F$. We choose the domains on the composition so that both $\exp_f^F, \exp_{h\cdot f}^F$ are diffeomorphisms. Fixed $f'$ 
sufficiently close to $f$, assume that $\gamma : (-\epsilon,\epsilon) \rightarrow F$ is the minimizing geodesic between $f$ and $f'$.

Let $P_{\gamma}$ be the parallel transport along it, and $P_{\widetilde \gamma}$ be the parallel transport along $\sigma\circ \gamma$. We claim that 
\[I_{\gamma,\widetilde \gamma} := P_{\widetilde \gamma}~\circ~\phi~\circ~P_{\gamma}^{-1}\]
commutes with the Riemann tensor of $\ga_F$. This implies that $\sigma$ is not only a local isometry but that $d\sigma(f') = I_{\gamma,\widetilde\gamma}$ for any $f'$ in the domain of $\sigma$. Once, by construction, $\sigma$ reverses geodesics, the claim shall follow.

Once $A$ is a tensor, let us extend $X,Y \in \cal H_x$ basically along $\gamma$ requiring that if $\overline X, \overline Y$ are such extensions, then $\nabla^{\cal H_{\gamma(s)}}_{\overline X(s)}\overline Y(s) \equiv 0$. Since both $\overline Y(s), A^*_{\overline X}\dot{\gamma}(s)$ are basic along $\gamma$, the inner product
\[\ga(A_{\overline X}{\overline Y(s)},\dot{\gamma}(s)) = \ga(\overline Y(s),A^*_{\overline X}\dot{\gamma}(s))\]
is constant along $\gamma$, so $\frac{\nabla}{ds}A_{\overline X(s)}{\overline Y(s)}$ is always orthogonal to $\dot{\gamma}(s)$. 

The fatness assumption implies that $A_{\overline X(s)}{\overline Y(s)}$ is a Jacobi field along $\gamma$. Since $H$ has non-negative sectional curvature, then Wilking's Theorem \cite[Theorem 1.7.1]{gw} (see also \cite{wilkilng-dual}) ensures that either $A_{\overline X(s)}{\overline Y(s)}$ vanishes at some instant $s$, in which case it vanishes for every $s$, or $A_{\overline X(s)}\overline Y(s)$ is parallel along $\gamma$. If $A_XY(x)$ is non-zero, then $A_{\overline X(s)}{\overline Y(s)}$ is the parallel transport of $A_XY$ along $\gamma$, so $A_{\overline X(s)}\overline Y(s)$ is a Jacobi field with constant coefficients chosen from any parallel orthonormal frame along $\gamma$. The same argument can be applied to $P_{\widetilde \gamma}$. Hence, $I_{\gamma,\widetilde \gamma}$ is a linear isometry.
\end{proof}

 \section{On the fibers of non-negatively curved fat submersions}
In this final section, we achieve our paper's final goal: To classify the possible fibers on fat Riemannian submersion from non-negatively curved manifolds and classify fat Riemannian submersion from Lie groups and locally symmetric spaces.

	\subsection{Riemannian Foliations on Lie Groups with bi-invariant metrics}
\label{sec:liegroups}
In 1986, Ranjan asked if a Riemannian submersion with totally geodesic leaves $\pi:(G, Q)\rightarrow(B,\ga_B)$ from a compact \emph{simple} Lie group with a bi-invariant metric $Q$ is a \emph{coset foliation}, i.e., if the cosets give it for the action of a certain subgroup $H<G$. As Riemannian submersions are the primary examples of Riemannian foliations, Grove posed the problem of classifying Riemannian submersions from compact Lie groups with bi-invariant metrics (Problem 5.1 in \cite{grove_problems}).

Most examples of manifolds with positive sectional curvature are related to Riemannian submersions from Lie groups. Moreover, for a compact Lie group $G$ with a bi-invariant metric, it is known that homogeneous foliations are Riemannian and have totally geodesic leaves. Therefore, it is natural to inquire whether every Riemannian foliation with totally geodesic leaves is homogeneous.
In \cite{speranca_grove}, Sperança answered Ranjan's question in a more general setting, assuming such a submersion is defined only in an open subset of $G$ (with further compactness hypotheses). 

	\begin{theorem}[Theorem 1.1 in \cite{speranca_grove}]\label{thm:grove}
	Any totally geodesic Riemannian foliation on a compact connected Lie group $G$ with a bi-invariant metric $Q$ is isometric to the coset foliation induced by a subgroup $H$ of $G$.
	\end{theorem}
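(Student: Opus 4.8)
The plan is to prove the theorem by combining the local structure theory of totally geodesic Riemannian foliations with the rigidity of bi-invariant metrics, passing through the dual foliation. First I would recall that on a compact Lie group $(G,Q)$ with bi-invariant metric, $Q$ has non-negative sectional curvature, and a totally geodesic Riemannian foliation $\cal F$ automatically has \emph{bounded holonomy} (this is a well-known consequence of the bi-invariance: the curvature operator is essentially algebraic, so the holonomy Jacobi fields along horizontal geodesics satisfy a constant-coefficient linear ODE and are globally bounded — one may also invoke Proposition 3.4 in \cite{speranca2017on} together with the observation that $Q$ is symmetric). Hence Theorems \ref{thm:llohannunpublished} and \ref{thm:AS} apply: for every $x \in G$,
\[
T_x L_x^{\#} \cap \cal V_x = \mathrm{span}_{\bb R}\{A_X Y(x) : X, Y \in \cal H_x\},
\]
and $T L_p^{\#}=\cal H_p\oplus \mathrm{span}_{\bb R}\{\hat c(1)^{-1}(\lie a_{c(1)})\}$ over all horizontal curves $c$.

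Next I would identify the dual leaf through the identity $e$ as a subgroup. The key step is the following: since $Q$ is bi-invariant, $\nabla_X Y = \tfrac12[X,Y]$ for left-invariant fields, so the O'Neill tensor at $e$ is $A_X Y = \tfrac12[X,Y]^{\cal V}$ for $X,Y \in \cal H_e$. Therefore $\lie a_e = \mathrm{span}_{\bb R}\{[X,Y]^{\cal V} : X, Y \in \cal H_e\}$, and $T_e L_e^{\#}$ is obtained by also absorbing all holonomy-translates of such brackets along horizontal curves. I would argue that $\lie l := T_e L_e^{\#}$ is a Lie subalgebra of $\lie g$: the bracket-generating/involutivity structure of $T L^{\#}$ as a (generalized) distribution that is a union of leaves, combined with the fact that on a bi-invariant metric the dual leaf through $e$ is left-invariant under the subgroup it generates — more precisely, horizontal transport is by isometries that, together with left-translations, preserve the dual distribution — forces $L_e^{\#}$ to be the connected subgroup $H$ with Lie algebra $\lie l$. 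Then for an arbitrary point $g \in G$, $L_g^{\#} = gH$ (or $Hg$) by homogeneity, so the dual foliation is exactly the coset foliation of $H$.

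Finally I would upgrade "the dual foliation is $\{gH\}$" to "$\cal F$ itself is the coset foliation of some subgroup". Here I would use that $\cal F$ and $\cal F^{\#}$ are transverse in the integrable sense — at each point $T_x L_x^{\#} = \cal H_x \oplus (\cal V_x \cap T_x L_x^{\#})$ — and that on a totally geodesic foliation of a non-negatively curved space, the restriction of $\cal F$ to a dual leaf is again a totally geodesic Riemannian foliation, now on the homogeneous space $L_e^{\#}=H$ (a Lie group with bi-invariant metric, being a totally geodesic submanifold of $(G,Q)$), with the additional property that \emph{its} dual foliation has a single leaf. An induction on dimension, or a direct argument, reduces to the case where the ambient dual foliation is all of $G$, i.e. $\cal F^{\#}$ has one leaf; in that borderline case the horizontal distribution is bracket-generating and one shows directly, using again $\nabla_X Y=\tfrac12[X,Y]$ and the totally-geodesic (hence $\nabla$-parallel along leaves) condition on $\cal V$, that $\cal V_e$ is an ideal's worth of data making $\cal F=\{gH\}$; then descend the conclusion through each dual leaf. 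The main obstacle I anticipate is precisely this last synthesis step — showing the vertical distribution is left-invariant (equivalently, that $\cal V_e$ together with the group structure recovers the leaves globally) rather than merely that the dual leaves are cosets; this is where one must genuinely use that the holonomy acts by isometries of $(G,Q)$ and interacts compatibly with left translations, and it is the heart of the Sperança--Grove argument. Everything else is bookkeeping with O'Neill's formulas and the Ambrose--Singer description already set up in the excerpt.
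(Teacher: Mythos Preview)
The paper does not supply a proof of Theorem~\ref{thm:grove}; it is quoted as a result of Speran\c{c}a (answering a question of Grove and Ranjan), cited from \cite{speranca_grove}, and then used as a black box in the proof of Proposition~\ref{thm:zillerq}. There is therefore no argument on the paper's side to compare your outline against.

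On the content of your sketch: the dual-leaf apparatus you assemble (bounded holonomy from bi-invariance, Theorems~\ref{thm:llohannunpublished} and~\ref{thm:AS}, the identification $A_XY=\tfrac12[X,Y]^{\cal V}$ at $e$) is the right toolkit and is indeed what underlies Speran\c{c}a's argument. But the step you yourself flag as ``the main obstacle'' is a genuine gap, not bookkeeping. Two points are unjustified. First, the claim that the dual leaf $L_e^{\#}$ is a subgroup: your reasoning (``left-invariant under the subgroup it generates'') is circular, and integrability of the dual distribution alone does not force $T_eL_e^{\#}$ to be a subalgebra --- that would require the horizontal distribution to be left-invariant, which is exactly what is at stake. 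Second, even granting that the dual leaves are cosets of some $H$, you still need the \emph{original} leaves to be cosets. The leaf $L_e$ is totally geodesic, so $\cal V_e$ is a Lie triple system, but that is strictly weaker than being a subalgebra; your phrase ``$\cal V_e$ is an ideal's worth of data'' does not bridge this. The induction you propose (restrict $\cal F$ to a dual leaf and observe a single dual leaf there) only reduces to the bracket-generating case without resolving it. What is actually required is an argument that left translation by elements of $L_e$ preserves the foliation --- equivalently, that the vertical distribution is left-invariant --- and this is precisely the substantive content of \cite{speranca_grove}; it needs an additional idea your outline does not contain.
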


  Related to Theorem \ref{thm:grove} and Grove's problem, we prove a classification of fat foliations on Lie groups induced by the connected component of the submersion's fibers. The non-metric nature of the fatness condition combined with Theorem \ref{cor:guijarao} settles our goal.

 \begin{theorem}\label{thm:zillerq}
	Let $G$ be a compact connected Lie group. Then, any foliation $\mathcal{F}$ induced by the connected components of the fibers of a fat submersion $\pi:G\rightarrow B$ is induced by a Lie subgroup $H<G$. If $\dim H>1$ then $H$ is either $\mathrm{SO}(3)$ or $\mathrm{S}^3$.
	\end{theorem}
 \begin{proof}
Let $G$ be a compact connected Lie group and assume a foliation $\mathcal{F}$ on $G$ as in the hypothesis. Theorem \ref{cor:guijarao} ensures that $\cal F$ is twisted for any Riemannian metric. Theorem 6.5 in \cite{speranca2017on} yields the existence of a bi-invariant metric $Q$ in $G$ for which the leaves of $\cal F$ are totally geodesic. Theorem \ref{thm:grove} ensures that $\cal F$ is isometric to the coset foliation $\{Hg\}_{g\in G}$ induced by the holonomy group $H$ of $\pi$, concluding the first part of the statement.

Finally, Proposition \ref{prop:finishing} ensures that if $\dim F>1$, then $H$ is a positively curved Lie group in the bi-invariant metric, being the only possibilities $\mathrm{SO}(3),~\mathrm{S}^3$.\qedhere
	\end{proof}

 \subsection{On the fibers of non-negatively curved fat Riemannian submersions}
 We prove the following:
 \begin{theorem}\label{ithm:symmetricfiber}
       Let $\pi:(M,\ga)\rightarrow (B,\ga_B)$ be a fat Riemannian submersion with fibers $F$ from a compact connected manifold. Then the holonomy group $H$ of $\pi$ is one of the following, with possible fibers $H/K$:
\begin{table}[h!]
      \begin{center}
          \begin{tabular}{|c|c|c|c|}
\hline $H/K$ & $H$        &$K$ & $\dim H/K$\\
\hline \hline
\hline
$\mathrm{SO}(3)$ & $\mathrm{SO}(3)$ & $\{e\}$  & $3$\\
\hline
$\mathrm{S}^2$ & $\mathrm{SO}(3)$ & $\mathrm{SO}(2)$ & $2$\\
\hline
$\mathrm{S}^2$ & $\mathrm{SU}(2)$ & $\mathrm{SO}(2)$ & $2$\\
\hline
$\mathrm{S}^2$ & $\mathrm{Sp}(1)$ & $\mathrm{U}(1)$ & $2$\\
\hline
$\mathrm{S}^3$  & $\mathrm{Sp}(1)$ & $\{e\}$ & $3$\\
\hline
$\mathrm{S}^3$ & $\mathrm{Spin}(3)$ & $\{e\}$ & $3$\\
\hline
$\mathrm{S}^3$ & $\mathrm{SU}(2)$ & $\{e\}$ & $3$ \\
\hline
$\mathrm{S}^5$ & $\mathrm{SU}(4)$ &$\mathrm{Sp}(2)$ & $5$\\
\hline
$\mathrm{S}^{n(n-1)}$ & $\mathrm{SO}(2n)$ & $\mathrm{U}(n)$ & $n(n-1)$ with $n=2,3$\\
\hline
$\bb{RP}^3$ & $\mathrm{SU}(2)$ & $\bb Z_2$  & $3$\\
\hline
$\bb{CP}^n$ & $\mathrm{SU}(n+1)$ & $\mathrm{S}(\mathrm{U}(n)\times \mathrm{U}(1))$ & $2n$\\
\hline
$\bb{HP}^n$ & $\mathrm{Sp}(n+1)/\bb Z_2$ & $\left(\mathrm{Sp}(n)\times \mathrm{Sp}(1)\right)/\bb Z_2$ & $4n$\\
\hline
$\mathrm{Ca}\bb P^2$ & $\mathrm{F}_4$ &$\mathrm{Spin}(9)$ & $16$\\
\hline
$\bb{RP}^{2n}$ & $\mathrm{SO}(2n+1)$ & $\mathrm{SO}(2n)$ & $2n$\\
\hline
         \end{tabular} 
         \end{center}
\caption{Fibers constraint}
\label{table:fiber}
\end{table}
\FloatBarrier
 \end{theorem}
 \begin{proof}
     Theorem \ref{thm:symmetricfibers} and Proposition \ref{prop:finishing} combined ensure that if $\dim{H/K} > 1$, then $H/K$ is a symmetric space of compact type with positive sectional curvature. Theorem 4.5 in \cite{CHAVEL1970236}, on its turn, guarantees that $H/K$ is necessarily a rank one symmetric space that is two-point homogeneous -- a homogenous space $K/H$ is said \emph{two-point homogeneous} if for any $x, y, z, w \in K/H$ satisfying $\mathrm{dist}(x,y) = \mathrm{dist}(z,w)$, where $\mathrm{dist}$ is the metric induced distance function, there is $x^* \in G$ such that $x^*\cdot x = z$ and $x^*\cdot y = w$. The $\cdot$ operation denotes the $K$-action on $K/H$. Theorem 4.6 in \cite{CHAVEL1970236} asserts that $H/K$ has constant sectional curvature or is $1/4$-pinched. Lemma 1.2 in \cite{CHAVEL1970236} ensures that the rank of $H$ and $K$ is the same. 

     According to Theorem 4.8 in \cite{CHAVEL1970236}, if $H/K$ is simply connected and odd-dimensional, it is a sphere of constant sectional curvature. If $H/K$ is not simply connected (the dimension has no parity assumption), then $H/K$ is the real projective space of constant curvature. Theorem 4.9 in \cite{CHAVEL1970236} also guarantees that if $H/K$ is not isometric to a sphere of constant sectional curvature, its dimension is a multiple of $2, 4, 8$. These pieces of information gathered impose the desired restriction. \qedhere
 \end{proof}

 Requiring that $M$ is a locally symmetric space, one immediately gets from the former:
 \begin{corollary}
    \label{ithmq:ziller}
Let $G\hookrightarrow (M,\ga)\rightarrow (B,\ga_B)$ be a fat Riemannian principal bundle. If $(M,\ga)$ is isometric to a irreducible rank one symmetric space $H/K$ and $\dim G = 3$ then $B$ is isometric to a symmetric space, so $B = H'/K'$ and $(M, H, K, B, H', K')$ correspond to one of the following 
\begin{table}[h!]
\label{table:ziller}
\begin{center}
    \begin{tabular}{|c|c|c|c|c|c|}
    \hline
       $M$ & $H$ & $K$ & $B$ & $H'$ & $K'$  \\
        \hline
        \hline
$\mathbb{RP}^7$     & $\mathrm{SO}(8)$ & $\mathrm{SO}(7)$ &   $\bb{RP}^4$ & $\mathrm{SO}(5)$ & $\mathrm{SO(4)}$\\
       \hline
 $\mathbb{RP}^{11}$  & $\mathrm{SO}(12)$ & $\mathrm{SO}(11)$ &    $\bb{RP}^8$ & $\mathrm{SO}(9)$ & $\mathrm{SO}(8)$\\ \hline
  $\mathrm{S}^7$ & $\mathrm{SU}(3)$ & $\mathrm{SU}(1)$ &   $\bb{CP}^2$ & $\mathrm{SU}(3)$  & $\mathrm{S}(\mathrm{U}(2)\times \mathrm{U}(1))$\\
      \hline
   $\mathrm{S}^7$ & $\mathrm{Sp}(2)$ & $\mathrm{Sp}(1)$ &  $\mathbb{HP}^1$ & $\mathrm{Sp}(2)$  & $\mathrm{Sp}(1)\times \mathrm{Sp}(1)$ \\\hline
$\mathrm{S}^{11}$ & $\mathrm{Sp}(3)$ & $\mathrm{Sp}(2)$ &    $\mathbb{HP}^2$ & $\mathrm{Sp}(3)$ & $\mathrm{Sp}(2)\times \mathrm{Sp}(1)$\\
     \hline
    \end{tabular}
\end{center}
\caption{Locally symmetric spaces group realization}
    \end{table}
    \FloatBarrier
\end{corollary}

\subsection*{Concluding remarks}
\begin{itemize}
    \item 	The here employed techniques can be used to recover Bérard-Bergey result (\cite{bergery1976varietes}), i.e., if $\pi$ is a fat homogeneous Riemannian submersion \[H/K \hookrightarrow (G/K,\textsl{b}) \rightarrow (G/H,\bar{\textsl b}),\]
 where  $\textsl b$ has nonnegative sectional curvature and  $\dim{H/K} > 1$ then the triple $(K,H,G)$ can be one of the following:
\begin{table}[h!] \begin{center} \begin{tabular}{|c|c|c|}
\hline $G$ &$H$ & $K$ \\
\hline
$\mathrm{ SU }(3)$& $\mathrm{ SO }(3)$ & $\{e\}, \mathrm{ SO }(2)$ \\
\hline
$\mathrm{ SU }(n+2)$& $\mathrm{ SU }(n+1)$ & $\mathrm{S}(\mathrm{U}(n)\times \mathrm{U}(1))$ \\
\hline
$\mathrm{ SU }(2(n+1))$& $\mathrm{Sp}(n+1)$ & $\mathrm{Sp}(n)\times \mathrm{Sp}(1)$ \\
\hline
$\mathrm{ SU }(2n+4)$& $\mathrm{ SO }(2n+4)$ & $\mathrm{ SO }(2n+1)\times \mathrm{ SO }(3)$ \\
\hline
$\mathrm{G}_2$ & $\mathrm{ SO }(4)$ & $\mathrm{U}(2)$\\
\hline
$\mathrm{E}_6$ & $\mathrm{F}_4$ & $\mathrm{Spin}(9)$ \\
\hline
$\mathrm{E}_6$ & $\mathrm{Sp}(4)/\bb Z_2$ & $\left(\mathrm{Sp}(3)\times \mathrm{Sp}(1)\right)/\bb Z_2$ \\
\hline \end{tabular} \end{center}\FloatBarrier
\caption{Constraints to $G, H,$ and $K$}
\end{table} 
\FloatBarrier

 The possibilities to $H, K$ are ensured by Theorem \ref{ithm:symmetricfiber}, described in Table \ref{table:fiber}. On the other hand, we know that $G/H$ is a symmetric space of even dimension (see, for instance \cite[Proposition 3.2, p.17]{Ziller_fatnessrevisited}). If $2\leq \dim H/K \leq 3$, then $\dim G/H$ is a multiple of $4$, while $\dim H/K \geq 4$ implies that $\dim G/H$ is a multiple of $8$ -- Proposition \ref{prop:dimcons}. 
 
  Being $G/H$ a symmetric space of compact type with $H$ as in Table \ref{table:fiber}, one can infer the possible candidates to $G$ out of \cite[Table V, p.518]{HELGA}.
\item As a last observation, one notices from the former tables a relation with Theorem 4.1 in \cite{BOCHENSKI2016132}. The condition of fatness does not behave well under structure group reduction nor extension, except for very specific cases, some of which are described in Table \ref{table:fiber}. This is more evidence of how restrictive the fatness condition is. Our presented results are further evidence with the assumption of non-negatively curved metrics.
\end{itemize}

\section*{Acknowledgments}
The authors thank the anonymous referee for thoroughly evaluating our manuscript; the detailed feedback significantly enhanced its quality.

The S\~ao Paulo Research Foundation FAPESP supports L. F. C grant 2022/09603-9. L. G is partially supported by S\~ao Paulo Research Foundation FAPESP grants 2018/13481-0, 2021/04065-6 and 2023/13131-8. L. F. C would also like to thank the University of Fribourg and Anand Dessai for their hospitality during the initial stages of this work, supported by grant SNSF-Project 200020E\_193062 and the DFG-Priority Program SPP 2026.

\appendix
\section{A sketch of the proof of Theorem \ref{thm:llohannunpublished}}

\label{ap:llohann}

\begin{lemma}\label{lem:WNN}
	Let $\cal F$ be a totally geodesic Riemannian foliation with bounded holonomy on a compact non-negatively curved Riemannian manifold $(M,\ga)$. Then, for every $x\in M$, there is a neighborhood of $x$ and a $\tau>0$ such that
\begin{gather}\label{eq:nn}
\tau |X||Z||A^*_X\xi|\geq |\lr{(\nabla_XA^*\xi )X,Z}|
\end{gather}
for all horizontal vectors $X,Z$ and vertical vector $\xi$.\end{lemma}
\begin{proof}
Given $X,Z\in\cal H$ and $\xi\in\cal V$, O'Neill's equations (\cite[p. 44]{gw}) ensure that the unreduced sectional curvature $K(X,\xi+tZ)=R(X,\xi+tZ,\xi+tZ,X)$ is given by
\begin{equation}\label{eq:quadratictrick}
K(X,\xi+tZ)=t^2K(X,Z)+2t\lr{(\nabla_XA)_XZ,\xi}+|A^*_X\xi|^2.
\end{equation}
Once $\ga$ has non-negative sectional curvature, it follows that $K(X,\xi+tZ)\geq 0$. Therefore, the discriminant of \eqref{eq:quadratictrick} (seen as a polynomial on $t$) must be non-negative. That is
\begin{equation*}
0\leq K(X,Z)|A_X^*\xi|^2-\lr{(\nabla_XA)_XZ,\xi}^2.
\end{equation*}
On small neighborhoods, continuity of $K$ guarantees some $\tau>0$ such that $K(X,Z)\leq \tau|X|^2|Z|^2$. Extending $\xi$ to a holonomy field to the computations, one concludes that
\[\lr{(\nabla_X A)_ZY,\xi}=-\lr{(\nabla_X A^*\xi)Z,Y}\] for all horizontal $X,Y,Z$. \qedhere
\end{proof}

\begin{proposition}\label{prop:A-flatgeo}
Let $\cal F$ be a totally geodesic Riemannian foliation with bounded holonomy on a compact non-negatively curved Riemannian manifold $(M,\ga)$. Let  $X_0\in\cal H_x$ and $\xi_0\in\cal V_x$ be such that $A^*_X{\xi_0}=0$. Then, the holonomy field along $\xi(t)$ defined along $c(t)=\exp(tX_0)$ is such that $A_{\dot c(t)}^*{\xi(t)}=0$ for all $t$.
\end{proposition}
\begin{proof}
Taking $|X_0|=1$ and $Z=A^*_{\dot c}{\xi}$ in \eqref{eq:nn}, we get:
\begin{align}\label{proof:flatgeo1}
\tau|A^*_{\dot c}\xi|^2&\geq \lr{\nabla_{\dot c}A^*_{\dot c}\xi,A^*_{\dot c}\xi}=\frac{1}{2}\frac{d}{dt}|A^*_{\dot c}\xi|^2.
\end{align}
The Gronwall's inequality for $u(t)=|A^*_{\dot c}\xi|^2$ and implies that
\begin{equation*}\label{proof:flatgeo}
|A^*_{\dot c}\xi|^2\leq |A^*_{\dot c(0)}{\xi(0)}|^2e^{2\tau t}
\end{equation*}
for all $t>0$. In particular, if $A^*_{X(0)}{\xi(0)}=0$, then $A^*_{\dot c(t)}{\xi(t)}=0$ for all $t>0$. The same argument works for $t<0$ by replacing $X_0$ with $-X_0$. \qedhere
\end{proof}

 Fixed a holonomy field $\xi(t)$, the proof of Theorem \ref{thm:llohannunpublished} is finished with an understanding of the distribution $\cal D(t)=\ker (A^*\xi\co\cal H_{c(t)}\to \cal H_{c(t)})$. With the help of the previous results, it is possible to prove that:

\begin{proposition}[Proposition 2.6 in \cite{speranca_grove}]\label{prop:Dconstantrank} Let $X_0,\xi_0$ satisfy $A^*_{X_0}{\xi_0}=0$. If $\lambda^2$ is a continuous eigenvalue of $\cal D$ along $c(t)=\exp(tX_0)$, then either $\lambda$ vanishes identically, or $\lambda$ never vanishes. 
\end{proposition}

Theorem \ref{thm:llohannunpublished} follows from the classical Ambrose--Singer's Theorem combined with Proposition \ref{prop:Dconstantrank}. Combining the dual leaf terminology with the results in \cite{speranca2017on}, it is not hard to see that the classical Ambrose--Singer's Theorem can be stated as
\begin{theorem}[Ambrose--Singer]\label{thm:AS}
Let $\cal F$ be a Riemannian foliation on a path connected smooth manifold $M$. Denote $\lie a_y=\mathrm{span}_{\bb R}\{A_XY~|~X,Y\in\cal H_y\}$. Then, for every  $x\in M$,
\begin{equation*}
T L_{x}^\#=\cal H_x\oplus \mathrm{span}_{\bb R}\{\hat c(1)^{-1}(\lie a_{c(1)})~|~c : [0,1]\rightarrow M \text{ horizontal} \},
\end{equation*}
where $\hat c(1)^{-1}$ is the holonomy transport along $c$ from the time $t =1$ to the time $t = 0$.
\end{theorem}
\begin{proof}[Sketch of the proof of Theorem \ref{thm:llohannunpublished}]
Let $x\in M$. Observe that 
\begin{equation*}
\lie a_p^\bot=\{\xi\in\cal V_x~|~A^*_X\xi=0~\forall X\in\cal H_x\}.
\end{equation*}
Note that if $c$ is horizontal curve, then $\hat c(1)(\lie a_p^\bot)=\lie a_{c(1)}^\bot$. Indeed, verifying this for horizontal geodesics suffices since piecewise horizontal geodesics can smoothly approximate $c$. Let $c$ be a horizontal geodesic, $c(0)=x$, and $\xi(t)$ be a holonomy field with $\xi(0)\in\lie a_x^\bot$. Then $\ker A^*{\xi(0)} =\cal H_x$ and $\dim\ker A^*{\xi(t)}$ is constant with respect to $t$ (Proposition \ref{prop:Dconstantrank}). Thus $A^*{\xi(t)}=\cal H_{c(t)}$ for all $t$. This concludes the proof since then $\hat c(1)(\lie a_x)=\lie a_{c(1)}$ thus $\hat c(1)$ is an isometry. \qedhere 
\end{proof}

\end{document}